\newtheorem{Lemma}{Lemma}[section]
\newtheorem{Theorem}[Lemma]{Theorem}
\newtheorem{Corollary}[Lemma]{Corollary}
\theoremstyle{definition}
\newtheorem{Remark}[Lemma]{Remark}
\newtheorem{Definition}[Lemma]{Definition}
\newenvironment{Theoremn}[1]
  {\innercustomthm}
  {\endinnercustomthm}
\begin{document}

\title{Generalized	Jacobi identities and Jacobi elements \\ of the group ring of the symmetric group }

\author{Sergei O. Ivanov}
\address{Chebyshev Laboratory, St. Petersburg State University, 14th Line, 29b,
Saint Petersburg, 199178 Russia} 
\email{ivanov.s.o.1986@gmail.com}

\author{Savelii Novikov}
\address{Saint Petersburg, Russia}
\email{novikov.savelii00@gmail.com}

\begin{abstract}
By definition the identities $[x_1, x_2] + [x_2, x_1] = 0$ and
$[x_1, x_2, x_3] + [x_2, x_3, x_1] + [x_3, x_1, x_2] = 0$ hold in any Lie algebra. It
is easy to check that the identity $[x_1, x_2, x_3, x_4] + [x_2, x_1, x_4, x_3] +
[x_3, x_4, x_1, x_2] + [x_4, x_3, x_2, x_1] = 0$ holds in any Lie algebra as well.
I. Alekseev in his recent work introduced the notion of Jacobi subset of the symmetric group $S_n$. It is a subset of $S_n$ that gives an identity of this kind. We introduce a notion of Jacobi element of the group ring $\mathbb{Z}[S_n]$ and describe them on the language of equations on coefficients. Using this description we obtain a purely combinatorial necessary and sufficient condition for a subset to be Jacobi. 
\end{abstract}
\maketitle
\section*{Introduction}
By a Lie ring we mean a Lie algebra over $\mathbb{Z}$. Any Lie algebra can be considered as a Lie ring.
By definition the identities $[x_1, x_2] + [x_2, x_1] = 0$ and 
$[x_1, x_2, x_3] + [x_2, x_3, x_1] + [x_3, x_1, x_2] = 0$ hold in any Lie ring where $[x_1, . . . , x_n]$ 
denotes the left-normed bracket: $[x_1, . . . , x_n]=[[x_1, . . . , x_{n-1}],x_n]$. Moreover, it is easy to check that there is one more identity that 
holds in any Lie ring: $[x_1, x_2, x_3, x_4]+[x_2, x_1, x_4, x_3] + [x_4, x_3, x_2, x_1] + [x_3, x_4, x_1, x_2] = 0$. 
Following I. Alekseev we call a subset $T\subseteq S_n$  \textit{Jacobi} if the following identity 
holds in any Lie ring: $$\sum\limits_{\sigma\in S_n} [x_{\sigma(1)}, \dots , x_{\sigma(n)}] = 0.$$
In this paper we study a more general notion.
An element of the group ring $a \in \mathbb{Z}[S_n]$ is called  \textit{Jacobi element} if the following identity is satisfied in any Lie ring. 
\[
\sum\limits_{\sigma\in S_n} a(\sigma) [x_{\sigma(1)}, \dots , x_{\sigma(n)}] = 0
\] 
By $\mathsf{J}_n$ we denote the set formed by all Jacobi elements of $\mathbb{Z}[S_n]$.

Throughout the paper we use the following definition of $(s,t)$--shuffle. It is a pair $(\alpha,\beta)$ such that $\alpha:\{1,\dots,s\} \to \{1,\dots, s+t\}$ and
$\beta:\{1,\dots,t\}\to\{1,\dots, s+t\}$ are strictly monotonic functions with non intersecting images. 
The set of all $(s,t)$--shuffles is denoted by ${\sf Sh}(s,t)$. By ${\sf Sh}^1(s,t)$ we denote the set  of all $(s,t)$--shuffles with $\alpha(1)=1$. 
One of the main results is the following description of all Jacobi elements of $\mathbb{Z}[S_n]$.

\begin{Theoremn}{1}{\it Let $\lambda= \sum\limits_{\sigma\in S_n} \lambda(\sigma) \sigma $ be an element of $ \mathbb Z[S_n].$ 
Then $\lambda$ is a Jacobi element if and only if for any $\tau\in S_n$
$$  \sum\limits_{i=0}^{n-1} 
\sum\limits_{(\alpha,\beta)} (-1)^i \lambda\left(\tau\cdot\left(
\begin{smallmatrix} 
1 & \dots & i & i+1 & \dots & n \\
\beta(i) & \dots & \beta(1) & \alpha(1) & \dots & \alpha(n-i)
\end{smallmatrix}\right)^{-1}\right)=0,$$
where $(\alpha,\beta)$ runs over  ${\sf Sh}^{1}(n-i,i).$  }
\end{Theoremn}

The main result of this paper is a necessary and sufficient condition for a subset to be Jacobi.
Consider two subsets of $S_n$: 
$$\mathrm{I}_n^+ =\left\{\left(
\begin{smallmatrix} 
1 & \dots & i & i+1 & \dots & n \\
\beta(i) & \dots & \beta(1) & \alpha(1) & \dots & \alpha(n-i)
\end{smallmatrix}\right)^{-1}\mid (\alpha,\beta) \in Sh^1(n-i, i), \,i \mbox{ is even}, \ 0\leq i\leq n-1 \right\},$$
$$\mathrm{I}_n^- =\left\{\left(
\begin{smallmatrix} 
1 & \dots & i & i+1 & \dots & n \\
\beta(i) & \dots & \beta(1) & \alpha(1) & \dots & \alpha(n-i)
\end{smallmatrix}\right)^{-1}\mid (\alpha,\beta) \in Sh^1(n-i, i), \,  i \mbox{ is odd}, \ 0\leq i\leq n-1 \right\}.$$

\begin{Theoremn}{2}{\it 
Let $T$ be a subset of $S_n$.  Then $T$ is Jacobi if and only if for any $\tau\in S_n$
$$  \mid T \cap \tau \mathrm{I}_n^+ \mid = 
\mid T \cap \tau \mathrm{I}_n^- \mid.$$}
\end{Theoremn}
\noindent Using this theorem we obtain the following interesting identities that hold in any Lie ring.

$$[x_1,x_2,x_3,x_4]+[x_3,x_4,x_1,x_2]+[x_2,x_1,x_4,x_3]+[x_4,x_3,x_2,x_1]=0$$ 
$$[x_1,x_2,x_3,x_4]+[x_3,x_1,x_2,x_4]+[x_4,x_1,x_2,x_3]+[x_1,x_4,x_3,x_2]+[x_2,x_3,x_4,x_1]=0$$
$$[x_1,x_2,x_3,x_4]+[x_3,x_1,x_2,x_4]+[x_2,x_1,x_4,x_3]+[x_4,x_2,x_1,x_3]+[x_1,x_4,x_3,x_2]
+[x_2,x_3,x_4,x_1]=0$$
{\small $$ [x_1,x_2,x_3,x_4]+[x_3,x_1,x_2,x_4]+[x_2,x_1,x_4,x_3]+[x_4,x_2,x_1,x_3]+[x_1,x_3,x_4,x_2]+[x_3,x_4,x_1,x_2]+[x_2,x_3,x_4,x_1]=0$$}

\noindent The paper is organised as follows. In the first section we describe Jacobi elements and prove Theorem 1 with some additional statements. In the second section we consider Jacobi subsets using the language of Jacobi elements and prove Theorem 2. 

\section{Jacobi elements}

\begin{Definition} An element $a=\sum_{\sigma\in S_n} a(\sigma) \sigma$ of $\mathbb{Z}[S_n]$ is called a \textit{Jacobi element} if the following identity is satisfied in any Lie ring. 
$$\sum\limits_{\sigma\in S_n} a(\sigma) [x_{\sigma(1)},\dots , x_{\sigma(n)}] = 0$$ 
The set of all Jacobi elements of $\mathbb Z[S_n]$ is denoted by $\mathsf{J}_n$.
\end{Definition}

\begin{Definition} Let $G$ be a finite group, $\mathbb Z[G]$ be the group ring and $a=\sum_{g\in G} a(g)g, b=\sum_{g\in G} b(g)g$ be elements of $\mathbb Z[G].$  The scalar product of $a$ and $b$ is defined as follows
$$\langle a,b \rangle=\sum_{g\in G} a(g)\cdot b(g).$$
(By the scalar product in the group ring some authors mean  $\sum_{g\in G} a(g)\cdot b(g^{-1})$. We do not use this definition.) 
\end{Definition}
\noindent  The {\it antipode} is the map 
$$\mathsf{s}:\mathbb{Z}[G]\rightarrow\mathbb{Z}[G],\mbox{ such that } \mathsf{s}(a)=\sum\limits_{g\in G}a(g)g^{-1},$$
where $a=\sum\limits_{g\in G}a(g)g.$

\begin{Lemma}\label{tri} Let $a,b,c\in \mathbb Z[G].$ Then 
$$ \langle a\cdot b,c \rangle = \langle a,c\cdot{\sf s}(b)  \rangle.$$
\end{Lemma}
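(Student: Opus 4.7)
My plan is a straightforward index calculation that reduces both sides to the same symmetric double sum. The idea is to unfold the convolution product in the group ring, unfold the scalar product as defined, and match.

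First I would compute the left-hand side. Using $(a\cdot b)(k) = \sum_{g\in G} a(g) b(g^{-1}k)$, I get
$$\langle a\cdot b, c\rangle = \sum_{k\in G} c(k) \sum_{g\in G} a(g) b(g^{-1}k).$$
Then I would reindex by setting $k = gh$ (so for each fixed $g$, the variable $h$ ranges over $G$ bijectively), which turns this into the symmetric expression
$$\sum_{g,h\in G} a(g)\, b(h)\, c(gh).$$

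Next I would compute the right-hand side. By definition $\mathsf{s}(b) = \sum_{h\in G} b(h) h^{-1}$, so the coefficient of $\mathsf{s}(b)$ at an element $m\in G$ is $\mathsf{s}(b)(m) = b(m^{-1})$. Hence the convolution coefficient is
$$(c\cdot \mathsf{s}(b))(g) = \sum_{u\in G} c(u)\, \mathsf{s}(b)(u^{-1}g) = \sum_{u\in G} c(u)\, b(g^{-1}u),$$
and therefore
$$\langle a, c\cdot \mathsf{s}(b)\rangle = \sum_{g,u\in G} a(g)\, c(u)\, b(g^{-1}u).$$
Reindexing with $u = gh$ again converts this to $\sum_{g,h\in G} a(g)\, b(h)\, c(gh)$, matching the expression obtained from the left-hand side.

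There is essentially no serious obstacle here; the only point requiring care is the correct identification $\mathsf{s}(b)(m) = b(m^{-1})$ (so that $\mathsf{s}$ acts on coefficients by $g\mapsto g^{-1}$ on the indexing group), and the consistent substitution $k=gh$ on both sides. Once these are made, equality is immediate from the two symmetric double sums.
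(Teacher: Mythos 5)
Your computation is correct and is essentially the paper's own argument: both proofs expand the convolution and the scalar product on each side and observe that both reduce to the symmetric sum $\sum a(g)\,b(h)\,c(gh)$ over pairs $(g,h)\in G\times G$. Your version just makes the reindexing $k=gh$ and the identity $\mathsf{s}(b)(m)=b(m^{-1})$ slightly more explicit than the paper does.
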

\begin{proof}
Computations show that 
$$\langle a\cdot b,c \rangle= \sum_{g\in G} (a\cdot b)(g)\cdot c(g)= \sum_{g\in G}\ \sum_{\substack{f,h\in G \\ f \cdot h = g}} a(f)\cdot b(h)\cdot c(g)$$ 
$$\langle a,c\cdot {\sf s}(b)  \rangle = \sum_{g\in G} a(g)\cdot (c\cdot {\sf s}(b))(g)=\sum_{g\in G}\ \  \sum_{\substack{f,h\in G \\ f \cdot h = g}} a(g)\cdot b(h^{-1})\cdot c(f).$$
Then both scalar products are the same sums of elements $a(g_1)\cdot b(g_2)\cdot c(g_3)$, where $g_1, g_2, g_3 \in G$ and $g_1 \cdot g_2 = g_3$. 
\end{proof}

\noindent If $X\subseteq \mathbb{Z}[G]$ we set $X^\perp=\{a\in \mathbb Z[G]\mid \forall x\in X\ \langle a,x \rangle=0 \}$ and call it  {\it orthogonal complement}. 

\begin{Remark}\label{rem}
For any $X\subseteq\mathbb{Z}[G]$ the following is satisfied:
$$X^\perp=\; \langle X \rangle^\perp,$$ 
where $\langle X \rangle$ is the abelian subgroup generated by $X$.
\end{Remark}

\begin{Lemma}\label{mlemma}
Let $G$ be a finite group and $\lambda\in \mathbb Z[G]$. Then the following is satisfied:
$$Ker\, (\cdot\,\lambda) = \left(G\cdot{\sf s}(\lambda)\right)^{\perp},$$
where $\cdot\,\lambda$ is a map such that $(\cdot\,\lambda)(a)=a\cdot\lambda$ for every $a\in\mathbb{Z}[G]$.
\end{Lemma}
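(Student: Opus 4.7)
The plan is to reduce the equality $\mathrm{Ker}(\cdot\,\lambda) = (G \cdot \mathsf{s}(\lambda))^\perp$ to a direct application of Lemma \ref{tri}. The crucial elementary observation is that for any $b \in \mathbb{Z}[G]$ and any $g \in G$, the scalar product $\langle b, g \rangle$ equals the coefficient $b(g)$, because $g$ has coefficient $1$ at itself and $0$ elsewhere. Consequently $b = 0$ in $\mathbb{Z}[G]$ if and only if $\langle b, g \rangle = 0$ for every $g \in G$.

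First I would take an arbitrary $a \in \mathbb{Z}[G]$ and rewrite the condition $a \in \mathrm{Ker}(\cdot\,\lambda)$, that is $a \cdot \lambda = 0$, as the statement that $\langle a\cdot \lambda, g\rangle = 0$ for all $g \in G$. Next I would apply Lemma \ref{tri} with $c = g$ and $b = \lambda$ to obtain
$$\langle a \cdot \lambda, g \rangle = \langle a, g \cdot \mathsf{s}(\lambda) \rangle.$$
Thus $a \cdot \lambda = 0$ is equivalent to $\langle a, g\cdot \mathsf{s}(\lambda)\rangle = 0$ for every $g\in G$, which is exactly the condition that $a$ lies in the orthogonal complement of the set $G\cdot \mathsf{s}(\lambda) = \{g\cdot \mathsf{s}(\lambda) \mid g\in G\}$.

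There is no real obstacle here: the proof is a two-line consequence of Lemma \ref{tri} together with the fact that coefficients of an element of $\mathbb{Z}[G]$ are recovered by pairing against basis elements. The only small thing worth being careful about is that the orthogonal complement here is taken with respect to the set $G\cdot \mathsf{s}(\lambda)$ rather than the subgroup it generates, but by Remark \ref{rem} this distinction does not matter, so the argument is clean as stated.
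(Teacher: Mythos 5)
Your proof is correct and follows essentially the same route as the paper: both reduce the statement to Lemma \ref{tri}. The only (harmless) difference is that you test vanishing of $a\cdot\lambda$ directly against the basis elements $g\in G$ via $\langle a\cdot\lambda,g\rangle=(a\cdot\lambda)(g)$, which lands immediately on $(G\cdot\mathsf{s}(\lambda))^\perp$, whereas the paper first identifies $\mathrm{Ker}(\cdot\,\lambda)$ with $(\mathbb{Z}[G]\cdot\mathsf{s}(\lambda))^\perp$ and then invokes Remark \ref{rem} to pass to the generating set.
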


\begin{proof}
First, we need to prove that 
$Ker\, (\cdot\,\lambda) = \left(\mathbb{Z}[G]\cdot{\sf s}(\lambda)\right)^{\perp}$. Consider $a\in Ker\, (\cdot\,\lambda)$ then $a\cdot\lambda=0$. According to basic properties of scalar product and lemma \ref{tri} the following is satisfied:
\begin{align*}
   \forall x\in \mathbb{Z}[G]\quad&\langle x,a\cdot\lambda\rangle=0 \\
   								  &\langle a\cdot\lambda,x\rangle=0 \\
   								  &\langle a,x\cdot\mathsf{s}(\lambda)\rangle=0
\end{align*}
Hence $a\in \left(\mathbb{Z}[G]\cdot{\sf s}(\lambda)\right)^{\perp}$. Consider 
$b\in \left(\mathbb{Z}[G]\cdot{\sf s}(\lambda)\right)^{\perp}$ then $\forall c\in \mathbb{Z}[G]\cdot
\mathsf{s}(\lambda) \:\: \exists x \in \mathbb{Z}[G]\:\: c=x\cdot\mathsf{s}(\lambda)$ 
\begin{align*}
   							      & \langle b,x\cdot\mathsf{s}(\lambda)\rangle=0 \\
   								  &\langle b\cdot\lambda,x\rangle=0 
\end{align*}
Hence $b\cdot\lambda=0$ and $b\in Ker\, (\cdot\,\lambda)$. It is obvious that $ \mathbb{Z}[G]\cdot{\sf s}(\lambda)=\langle G\cdot{\sf s}(\lambda)\rangle$ then according to remark \ref{rem} and statement above
$$Ker\, (\cdot\,\lambda)=\left(\mathbb{Z}[G]\cdot{\sf s}(\lambda)\right)^{\perp}=\left(G\cdot{\sf s}(\lambda)\right)^{\perp}.$$  
\end{proof}

\noindent By $\omega_n$ we denote an element of $\mathbb{Z}[S_n]$ which is defined as follows: 
$$\omega_n = \sum_{i=0}^{n-1}\sum_{(\alpha,\beta)}(-1)^i
\left(
\begin{array}{cccccc}
1 & \dots & i & i+1 & \dots & n \\
\beta(i) & \dots & \beta(1) & \alpha(1) & \dots & \alpha(n-i)
\end{array}
\right),$$ 
where $(\alpha,\beta)\in Sh^1(n-i,i)$. 
 
\noindent In the work we used the following notation. 

\noindent $\gamma_n$ is the abelian subgroup of the free associative ring  $
\mathbb Z\langle x_1,\dots,x_n\rangle$ generated by monomials $ x_{\sigma(1)}\dots x_{\sigma(n)},$ where $ \sigma\in S_n.$ 

\noindent $\beta_n:\gamma_n\longrightarrow\gamma_n$ is the  homomorphism  that is defined on the basis as  
$\beta_n(x_{\sigma(1)}\dots x_{\sigma(n)}) = [x_{\sigma(1)},\dots ,x_{\sigma(n)}].$

\noindent  $\widetilde\beta_n:\mathbb{Z}[S_n]\longrightarrow\gamma_n$ is  the  homomorphism  such that 
$\widetilde{\beta_n}(\sum_{\sigma\in S_n}\alpha(\sigma)\sigma)=\sum_{\sigma\in S_n}\alpha(\sigma) [x_{\sigma(1)}, \dots  , x_{\sigma(n)}].$ 
	
\noindent $\varphi:\gamma_n\longrightarrow\mathbb{Z}[S_n]$ is the isomorphism given by: 
$$\varphi(\sum_{\sigma\in S_n}\alpha(\sigma)\cdot x_{\sigma(1)}\dots x_{\sigma(n)})=
\sum_{\sigma\in S_n}\alpha(\sigma)\sigma.$$ 
 
\noindent It is important to mention that $\varphi([x_1,\dots,x_n])=\omega_n$. 

\noindent $\Omega_{n}:\mathbb{Z}[S_n]\rightarrow\mathbb{Z}[S_n]$ is a homomorphism such that for all 
$a\in \mathbb{Z}[S_n] \quad \Omega_{n}(a)=a\cdot\omega_n$. 

\noindent There is a connection between all these maps.

\begin{Lemma}\label{diag} The following diagram is commutative: 
\begin{center}
\adjustbox{scale=1.3,center}{
\begin{tikzcd}
    \gamma_n \arrow{r}{\beta_n} \arrow[swap]{d}{\varphi} & \gamma_n \arrow{d}{\varphi}\\  
    \mathbb{Z}[S_n]\arrow[dotted]{ur}{\widetilde\beta_n} \arrow{r}[swap]{\Omega_n} & \mathbb{Z}[S_n].
\end{tikzcd}}
\end{center}
\end{Lemma}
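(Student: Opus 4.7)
The diagram has three arrows to check against each other. My plan is to handle the two triangles first (they are almost tautological from the definitions) and then reduce the square to a single key computation about how substitution of variables translates under $\varphi$ into multiplication in $\mathbb{Z}[S_n]$.

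For the upper triangle, I will just unwind definitions. On the basis element $\sigma \in S_n$ we have $\varphi^{-1}(\sigma) = x_{\sigma(1)} \cdots x_{\sigma(n)}$ by definition of $\varphi$, and applying $\beta_n$ then gives $[x_{\sigma(1)}, \dots, x_{\sigma(n)}]$, which is exactly the value of $\widetilde{\beta}_n$ on $\sigma$. So $\widetilde{\beta}_n = \beta_n \circ \varphi^{-1}$ as $\mathbb{Z}$-linear maps, and the upper triangle commutes.

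The heart of the proof is the square $\varphi \circ \beta_n = \Omega_n \circ \varphi$. Since all maps are $\mathbb{Z}$-linear, it suffices to check equality on the basis element $x_{\sigma(1)} \cdots x_{\sigma(n)}$ for each $\sigma \in S_n$. Write $[x_1, \dots, x_n] = \sum_{\pi \in S_n} c_\pi \, x_{\pi(1)} \cdots x_{\pi(n)}$ for some integers $c_\pi$; then by the definition of $\omega_n$ and the remark $\varphi([x_1,\dots,x_n]) = \omega_n$ we have $\omega_n = \sum_\pi c_\pi \pi$. The key observation is that substituting $x_i \mapsto x_{\sigma(i)}$ in the monomial $x_{\pi(1)} \cdots x_{\pi(n)}$ produces $x_{\sigma\pi(1)} \cdots x_{\sigma\pi(n)}$, hence
\[
[x_{\sigma(1)}, \dots, x_{\sigma(n)}] = \sum_{\pi \in S_n} c_\pi \, x_{\sigma\pi(1)} \cdots x_{\sigma\pi(n)}.
\]
Applying $\varphi$ and using its definition gives $\varphi([x_{\sigma(1)}, \dots, x_{\sigma(n)}]) = \sum_\pi c_\pi (\sigma\pi) = \sigma \cdot \omega_n$. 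This is exactly $\Omega_n(\varphi(x_{\sigma(1)} \cdots x_{\sigma(n)}))$, so the square commutes.

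The lower triangle $\varphi \circ \widetilde{\beta}_n = \Omega_n$ then follows by composition: using the upper triangle and the square,
\[
\varphi \circ \widetilde{\beta}_n = \varphi \circ \beta_n \circ \varphi^{-1} = \Omega_n \circ \varphi \circ \varphi^{-1} = \Omega_n.
\]
The only subtlety is keeping the direction of multiplication straight: the variable substitution $x_i \mapsto x_{\sigma(i)}$ corresponds, under the chosen convention $\varphi(x_{\pi(1)} \cdots x_{\pi(n)}) = \pi$, to \emph{left} multiplication by $\sigma$, which matches $\Omega_n(\sigma) = \sigma \cdot \omega_n$. That bookkeeping is the main thing to double-check, but once the convention is fixed there is no real obstacle.
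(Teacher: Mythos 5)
Your proof is correct and follows essentially the same route as the paper: both reduce everything to the identity $\varphi([x_{\sigma(1)},\dots,x_{\sigma(n)}])=\sigma\cdot\omega_n$, i.e.\ to the fact that the substitution $x_i\mapsto x_{\sigma(i)}$ corresponds under $\varphi$ to left multiplication by $\sigma$. The only difference is organizational (you verify the square and deduce the lower triangle, while the paper verifies the two triangles directly) and that you actually derive this key identity from the expansion of $[x_1,\dots,x_n]$, whereas the paper asserts it with a citation.
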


\begin{proof}
To prove the fact that this diagram is commutative we need to show that $\widetilde\beta_n\circ\varphi=\beta_n$ and $\varphi\circ\widetilde\beta_n=\Omega_n$. \\ 
\textit{The first equality.}  
Consider an element $g\in\gamma_n$ then $g=\sum\limits_{\sigma\in S_n}g(\sigma) x_{\sigma(1)} {\dots} x_{\sigma(n)}$. It is obvious that
\begin{align*}
\varphi(g) =\sum\limits_{\sigma\in S_n}g(\sigma)\sigma \\
\widetilde\beta_n(\varphi(a)) =\sum\limits_{\sigma\in S_n}g(\sigma) [x_{\sigma(1)},\dots,&x_{\sigma(n)}]=\beta_n(g)
\end{align*} \noindent \textit{The second equality.}
Consider an element $a\in\mathbb{Z}[S_n]$ then $a=\sum_{\sigma\in S_n}a(\sigma)\sigma$. 
We can assume that $\widetilde\beta_n(a)=\sum_{\sigma\in S_n}a(\sigma) [x_{\sigma(1)},
\dots,x_{\sigma(n)}]$ and $\varphi([x_{\sigma(1)},\dots,x_{\sigma(n)}])=\sigma\cdot\omega_n$ (see reference \ref{Ilya}, identity (2.1)). It is correct because $\sigma$ permutes corresponding letters (indices) in obtained formula for bracket $[x_1, \dots, x_n]$. Hence the following is satisfied: 
\begin{align*}
\varphi(\widetilde\beta_n(a))=\sum\limits_{\sigma\in S_n}a(\sigma)(\sigma\cdot\omega_n)=\Omega_n(a).
\end{align*}
\end{proof}

\begin{Lemma}\label{ker}
The following is satisfied:
$$\mathsf{J}_n=Ker\;\tilde\beta_n=Ker\;\Omega_n$$
\end{Lemma}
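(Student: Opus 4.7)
The plan splits naturally into two parts. The equality $\operatorname{Ker}\widetilde\beta_n = \operatorname{Ker}\Omega_n$ is immediate from Lemma~\ref{diag}: the commutative diagram gives $\Omega_n = \varphi\circ\widetilde\beta_n$, and since $\varphi$ is a $\mathbb{Z}$-module isomorphism, composition with it preserves kernels. So there is no real work to do here.

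The content is in the equality $\mathsf{J}_n = \operatorname{Ker}\widetilde\beta_n$. I would use the free Lie ring $\mathcal{L}_n$ on generators $x_1,\dots,x_n$ as a bridge, together with the key external fact that $\mathcal{L}_n$ embeds into the free associative ring $\mathbb{Z}\langle x_1,\dots,x_n\rangle$ with the Lie bracket sent to the commutator $[u,v]=uv-vu$ (Witt's theorem for Lie rings over $\mathbb{Z}$). Under this embedding, the element $\sum_\sigma a(\sigma)[x_{\sigma(1)},\dots,x_{\sigma(n)}]$ of $\mathcal{L}_n$ is identified with $\widetilde\beta_n(a)\in\gamma_n\subseteq\mathbb{Z}\langle x_1,\dots,x_n\rangle$.

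Given this identification, both inclusions become formal. For $\mathsf{J}_n\subseteq\operatorname{Ker}\widetilde\beta_n$, I would specialize the defining universal identity of a Jacobi element to $L=\mathcal{L}_n$ with $y_i=x_i$; this forces $\sum_\sigma a(\sigma)[x_{\sigma(1)},\dots,x_{\sigma(n)}]=0$ in $\mathcal{L}_n$, hence in $\mathbb{Z}\langle x_1,\dots,x_n\rangle$ via Witt, so $\widetilde\beta_n(a)=0$. For the reverse inclusion, if $\widetilde\beta_n(a)=0$ in the free associative ring then injectivity of the embedding gives vanishing already in $\mathcal{L}_n$, and the universal property of the free Lie ring propagates this vanishing along any homomorphism $\mathcal{L}_n\to L$ with $x_i\mapsto y_i$, yielding $\sum_\sigma a(\sigma)[y_{\sigma(1)},\dots,y_{\sigma(n)}]=0$ for every Lie ring $L$ and every substitution, i.e.\ $a\in\mathsf{J}_n$.

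The only real obstacle is the reliance on Witt's theorem over $\mathbb{Z}$ (that the canonical map from the free Lie ring into the free associative ring is injective). Once that is granted—or cited—the rest of the argument is a routine unwinding of the universal property of $\mathcal{L}_n$ and of the definition of $\widetilde\beta_n$.
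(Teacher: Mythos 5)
Your proposal is correct and follows essentially the same route as the paper: the paper also reduces the first equality to the free Lie ring realized inside $\mathbb{Z}\langle x_1,\dots,x_n\rangle$ (citing Reutenauer for the fact you call Witt's theorem) and then applies the universal property, and it obtains the second equality from Lemma~\ref{diag} exactly as you do.
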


\begin{proof}  ${\sf J}_n$ consists of elements $\sum a(\sigma) \sigma \in \mathbb Z[S_n]$ such that $\sum a(\sigma)[l_{\sigma(1)},\dots, l_{\sigma(n)}]=0$ for any elements $l_1,\dots, l_n$ of any Lie ring $L.$ Denote by $\mathcal L(x_1,\dots,x_n)$ the Lie subalgebra of $\mathbb Z\langle x_1,\dots,x_n \rangle$ generated by $x_1,\dots,x_n.$ It is the free Lie ring generated by $x_1,\dots,x_n$ (see \cite{Reutenauer}). Then there is a homomorphism of Lie rings $\Phi: \mathcal L(x_1,\dots,x_n)\to L$ such that $\Phi(x_i)=l_i.$ It follows that an element $a$ of $\mathbb Z[S_n]$ lies in ${\sf J}_n$ if and only if $\sum a(\sigma)[x_{\sigma(1)},\dots, x_{\sigma(n)}]=0$ in $\mathcal L(x_1,\dots,x_n).$ Then ${\sf J}_n=Ker \:\tilde \beta_n.$

To prove the second equality we need to use Lemma \ref{diag}. According to it $\Omega_n=\varphi\circ\widetilde\beta_n$ and $\mathsf{J}_n=Ker\,\widetilde\beta_n$.
Since $\varphi$ is an isomorphism then $\varphi(0)=0$ hence for all  
$a\in \mathsf{J}_n=Ker\,\widetilde\beta_n$ 
$$\Omega_n(a)=\varphi(\widetilde\beta_n(a))=\varphi(0)=0\;\Longrightarrow \; a\in Ker\,\Omega_n$$
Consider an element $b\in Ker\;\Omega_n$ then $\Omega_n(b)=\varphi(\widetilde\beta_n(b))=0$. 
Hence $\widetilde\beta_n(b)=0$ and $b\in\mathsf{J}_n$.  
\end{proof}

\begin{Corollary}\label{col}
 $$\mathsf{J}_n=Ker\;\tilde\beta_n=Ker\;\Omega_n=\left(S_n\cdot s(\omega_n)\right)^\perp.$$
\end{Corollary}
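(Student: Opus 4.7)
The plan is essentially to observe that this corollary is just the assembly of two results already established in the preceding material. The first two equalities, $\mathsf{J}_n = \mathrm{Ker}\,\widetilde\beta_n = \mathrm{Ker}\,\Omega_n$, are precisely the content of Lemma \ref{ker}, so nothing new is required for them.

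For the remaining equality $\mathrm{Ker}\,\Omega_n = (S_n\cdot \mathsf{s}(\omega_n))^\perp$, I would note that by the definition of $\Omega_n$, the map $\Omega_n$ is exactly the right-multiplication map $(\cdot\,\omega_n)$ on $\mathbb{Z}[S_n]$. Therefore this equality is the specialization of Lemma \ref{mlemma} to the group $G = S_n$ and the element $\lambda = \omega_n$. That is, Lemma \ref{mlemma} gives
\[
\mathrm{Ker}\,(\cdot\,\omega_n) = (S_n\cdot \mathsf{s}(\omega_n))^\perp,
\]
which is precisely what is needed. Stringing these together yields the chain of equalities in the statement.

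There is no real obstacle here; the corollary is purely a bookkeeping step that combines Lemma \ref{ker} (identifying $\mathsf{J}_n$ with $\mathrm{Ker}\,\Omega_n$ via the commutative diagram of Lemma \ref{diag}) with Lemma \ref{mlemma} (describing the kernel of a right-multiplication map as an orthogonal complement). The only thing to verify is that $\Omega_n$ is indeed the right-multiplication map by $\omega_n$, which is immediate from its definition. The conceptual content of the corollary — and the reason it matters for proving Theorem 1 — is that it reformulates membership in $\mathsf{J}_n$ as orthogonality to the finite set $S_n\cdot \mathsf{s}(\omega_n)$, which is a system of linear equations on the coefficients $\lambda(\sigma)$ indexed by $\tau \in S_n$, each equation being of exactly the form appearing in Theorem 1.
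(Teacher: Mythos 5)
Your proof is correct and matches the paper's own argument: the first two equalities are quoted from Lemma \ref{ker}, and the last is Lemma \ref{mlemma} applied to $G=S_n$ and $\lambda=\omega_n$, using that $\Omega_n$ is right multiplication by $\omega_n$. Nothing further is needed.
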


\begin{proof}
$S_n$ is a finite group and $\omega_n\in \mathbb{Z}[S_n]$ then conditions of Lemma \ref{mlemma} are fulfilled 
and according to lemma \ref{ker} we get the assertion. 
\end{proof}

\begin{Theorem}\label{th1} Let $\lambda= \sum\limits_{\sigma\in S_n} \lambda(\sigma) \sigma $ be an element of $ \mathbb Z[S_n].$ 
Then $\lambda$ is a Jacobi element if and only if for any $\tau\in S_n$
$$  \sum\limits_{i=0}^{n-1} 
\sum\limits_{(\alpha,\beta)} (-1)^i \lambda\left(\tau\cdot\left(
\begin{smallmatrix} 
1 & \dots & i & i+1 & \dots & n \\
\beta(i) & \dots & \beta(1) & \alpha(1) & \dots & \alpha(n-i)
\end{smallmatrix}\right)^{-1}\right)=0,$$
where $(\alpha,\beta)$ runs over  ${\sf Sh}^{1}(n-i,i).$    
\end{Theorem}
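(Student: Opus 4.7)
The plan is to derive Theorem \ref{th1} as a direct unpacking of Corollary \ref{col}, which already identifies $\mathsf{J}_n$ with $(S_n\cdot \mathsf{s}(\omega_n))^\perp$. Once that corollary is in hand, the remaining work is purely a bookkeeping calculation, so there is no real obstacle; the content has been absorbed into the earlier lemmas.

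Concretely, I would abbreviate the two-line permutation appearing in the definition of $\omega_n$ by $\sigma_{i,\alpha,\beta}$, so that
$$\omega_n \;=\; \sum_{i=0}^{n-1}\sum_{(\alpha,\beta)}(-1)^i\,\sigma_{i,\alpha,\beta},\qquad (\alpha,\beta)\in {\sf Sh}^1(n-i,i).$$
Since $\mathsf{s}$ is $\mathbb{Z}$-linear and sends each $g\in S_n$ to $g^{-1}$, applying $\mathsf{s}$ termwise and then left-multiplying by $\tau$ yields
$$\tau\cdot \mathsf{s}(\omega_n)\;=\;\sum_{i=0}^{n-1}\sum_{(\alpha,\beta)}(-1)^i\,\tau\sigma_{i,\alpha,\beta}^{-1}.$$
Using bilinearity of the scalar product together with the elementary identity $\langle \lambda,g\rangle = \lambda(g)$ for $g\in S_n$, I then obtain
$$\langle \lambda,\;\tau\cdot \mathsf{s}(\omega_n)\rangle \;=\;\sum_{i=0}^{n-1}\sum_{(\alpha,\beta)}(-1)^i\,\lambda(\tau\sigma_{i,\alpha,\beta}^{-1}),$$
which is precisely the left-hand side of the identity in the theorem.

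To finish, I would invoke Corollary \ref{col} (together with Remark \ref{rem}, which lets the orthogonality condition be tested on the set $S_n\cdot \mathsf{s}(\omega_n)$ rather than on the abelian subgroup it generates): $\lambda$ is a Jacobi element if and only if $\langle \lambda,\tau\cdot \mathsf{s}(\omega_n)\rangle = 0$ for every $\tau\in S_n$. The computation above translates this equivalence verbatim into the stated condition on the coefficients of $\lambda$. The only subtlety worth mentioning is ensuring that $\mathsf{s}$ is applied to $\omega_n$ termwise, which is allowed because $\mathsf{s}$ is additive; everything else is pure substitution.
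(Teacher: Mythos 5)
Your proposal is correct and follows essentially the same route as the paper: both invoke Corollary \ref{col} to identify $\mathsf{J}_n$ with $(S_n\cdot\mathsf{s}(\omega_n))^\perp$, expand $\tau\cdot\mathsf{s}(\omega_n)$ termwise using the definition of $\omega_n$ and the antipode, and read off the coefficient condition via $\langle\lambda,g\rangle=\lambda(g)$. No gaps; your explicit mention of Remark \ref{rem} and the linearity of $\mathsf{s}$ only makes the bookkeeping slightly more careful than the paper's version.
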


\begin{proof}
Consider $\lambda= \sum\limits_{\sigma\in S_n} \lambda(\sigma) \sigma\in \mathbb{Z}[S_n]$. Then 
$\lambda\in {\sf J}_n$ is equivalent to $\lambda\in(S_n\cdot\mathsf{s}(\omega_n))^\perp$ according to corollary \ref{col}. Hence for all $a=\tau\cdot\mathsf{s}(\omega_n)\in (S_n\cdot\mathsf{s}(\omega_n))$ it is true that
$\langle \lambda,\tau\cdot\mathsf{s}(\omega_n)\rangle=0$. Expanding $\omega_n$ by definition we get:
$$\langle \lambda, \sum_{i=0}^{n-1} \sum_{(\alpha,\beta)} (-1)^i\,
\tau\cdot \left(
\begin{smallmatrix}
1 & \dots & i & i+1 & \dots & n \\
\beta(i) & \dots & \beta(1) & \alpha(1) & \dots & \alpha(n-i)
\end{smallmatrix} \right)^{-1}
\rangle=0$$
Then it is equivalent to the following statement 
$$ \sum_{i=0}^{n-1} \sum_{(\alpha,\beta)} (-1)^i \lambda\left( \tau\cdot \left(
\begin{smallmatrix}
1 & \dots & i & i+1 & \dots & n \\
\beta(i) & \dots & \beta(1) & \alpha(1) & \dots & \alpha(n-i)
\end{smallmatrix}\right)^{-1}\right)=0.$$
\end{proof}

\section{Jacobi subsets}
\begin{Definition} The subset $T\subseteq S_n$ is called a \textit{Jacobi subset} if the 
following identity holds in any Lie ring: $$\sum_{\sigma\in S_n} [x_{\sigma(1)}, \dots , x_{\sigma(n)}] = 0.$$
\end{Definition}

\noindent So, we can consider Jacobi subset $T$ as a Jacobi element $a$ in $\mathbb{Z}[S_n]$ by defining it as 
$a=\sum_{\sigma\in S_n}\alpha_T(\sigma)  \sigma$, where coefficient is defined as $\alpha_T(\sigma)=\{\begin{smallmatrix} 1,\, \sigma\in T \\ 0,\, \sigma\not\in T 
\end{smallmatrix}$.

\noindent Sets $\mathrm{I}_n^+$ and $\mathrm{I}_n^-$ are defined as follows:
$$\mathrm{I}_n^+ =\left\{\left(
\begin{smallmatrix} 
1 & \dots & i & i+1 & \dots & n \\
\beta(i) & \dots & \beta(1) & \alpha(1) & \dots & \alpha(n-i)
\end{smallmatrix}\right)^{-1}\mid (\alpha,\beta) \in Sh^1(n-i, i), \,i \mbox{ is even}, \ 0\leq i\leq n-1 \right\},$$
$$\mathrm{I}_n^- =\left\{\left(
\begin{smallmatrix} 
1 & \dots & i & i+1 & \dots & n \\
\beta(i) & \dots & \beta(1) & \alpha(1) & \dots & \alpha(n-i)
\end{smallmatrix}\right)^{-1}\mid (\alpha,\beta) \in Sh^1(n-i, i), \,  i \mbox{ is odd}, \ 0\leq i\leq n-1 \right\}.$$

\begin{Theorem}
Let $T$ be a subset of $S_n$.  Then $T$ is Jacobi if and only if for any $\tau\in S_n$
$$  \mid T \cap \tau \mathrm{I}_n^+ \mid = 
\mid T \cap \tau \mathrm{I}_n^- \mid.$$
\end{Theorem}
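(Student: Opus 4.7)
The plan is to apply Theorem~\ref{th1} to the indicator element of $T$, namely $\lambda = \sum_{\sigma \in T} \sigma$, whose coefficient function is $\lambda(\sigma) = \alpha_T(\sigma)$, equal to $1$ on $T$ and $0$ elsewhere. Theorem~\ref{th1} then says that $T$ is Jacobi if and only if for every $\tau \in S_n$,
$$\sum_{i=0}^{n-1}\sum_{(\alpha,\beta)\in Sh^1(n-i,i)} (-1)^i\, \alpha_T\bigl(\tau\cdot\pi_{i,\alpha,\beta}^{-1}\bigr)=0,$$
where $\pi_{i,\alpha,\beta}$ denotes the two-line permutation appearing in that statement. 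Splitting the outer sum according to the parity of $i$ turns this into an equality of two nonnegative integers, and the goal becomes identifying those two sides with $|T\cap \tau \mathrm{I}_n^+|$ and $|T\cap \tau \mathrm{I}_n^-|$ respectively.

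The key step is to check that the parametrisation $(i,\alpha,\beta)\mapsto \pi_{i,\alpha,\beta}^{-1}$ is injective, so that each element of $\mathrm{I}_n^\pm$ is hit exactly once by the sum and the sum genuinely counts a cardinality without multiplicities. The crucial observation is that the condition $(\alpha,\beta)\in Sh^1(n-i,i)$ forces $\alpha(1)=1$, whence $\pi_{i,\alpha,\beta}(i+1)=1$. Therefore the position of $1$ in $\pi$ recovers $i+1$; once $i$ is known, $\alpha$ is the increasing sequence $\pi(i+1),\ldots,\pi(n)$ and $\beta$ is the sequence $\pi(i),\pi(i-1),\ldots,\pi(1)$. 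This gives injectivity and simultaneously shows $\mathrm{I}_n^+\cap \mathrm{I}_n^-=\emptyset$: permutations coming from even $i$ have $1$ at an odd position, those from odd $i$ at an even position, so the two sets are disjoint. This bookkeeping is the main obstacle in the proof; everything else is a formal rewriting.

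Once this is in place, the even-$i$ part of the sum equals $\sum_{\sigma\in \mathrm{I}_n^+} \alpha_T(\tau\sigma)$, which counts the $\sigma\in \mathrm{I}_n^+$ with $\tau\sigma\in T$, i.e.\ $|T\cap \tau \mathrm{I}_n^+|$ (using that left-multiplication by $\tau$ is a bijection on $S_n$). The odd-$i$ part likewise equals $|T\cap \tau \mathrm{I}_n^-|$, and the condition from Theorem~\ref{th1} becomes exactly $|T\cap\tau \mathrm{I}_n^+|=|T\cap\tau \mathrm{I}_n^-|$, which is the statement of Theorem~2.
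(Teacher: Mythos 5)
Your proof is correct and follows essentially the same route as the paper: apply Theorem~\ref{th1} to the indicator element $\sum_{\sigma\in T}\sigma$, split the resulting sum by the parity of $i$, and identify the two halves with $|T\cap\tau\mathrm{I}_n^+|$ and $|T\cap\tau\mathrm{I}_n^-|$. In fact you are slightly more careful than the paper, which passes from the double sum to the cardinalities without checking that the parametrisation $(i,\alpha,\beta)\mapsto\pi_{i,\alpha,\beta}^{-1}$ is injective; your observation that $\alpha(1)=1$ forces $\pi_{i,\alpha,\beta}(i+1)=1$, so that $i$, $\alpha$, $\beta$ are all recoverable from the permutation, supplies exactly the bookkeeping needed to make that step (and the disjointness of $\mathrm{I}_n^+$ and $\mathrm{I}_n^-$) rigorous.
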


\begin{proof}
As it was said in the beginning any Jacobi subset $T$ can be represented as a Jacobi element $a=\sum_{\sigma\in S_n}\alpha_T(\sigma)  \sigma$ and theorem \ref{th1}. can be applied to it. So, the subset $T$ is Jacobi if and only if  
$$\forall \tau \in S_n \quad \sum_{i=0}^{n-1} \sum_{(\alpha,\beta)} (-1)^i \alpha_T\left( \tau\cdot \left(
\begin{smallmatrix}
1 & \dots & i & i+1 & \dots & n \\
\beta(i) & \dots & \beta(1) & \alpha(1) & \dots & \alpha(n-i)
\end{smallmatrix}\right)^{-1}\right)=0,$$
where $(\alpha,\beta)\in {\sf Sh}^{1}(n-i,i).$  We can move terms with odd $i$ to the right side.
$$\sum_{2\, | \, i} \sum_{(\alpha,\beta)} \alpha_T\left( \tau\cdot \left(
\begin{smallmatrix}
1 & \dots & i & i+1 & \dots & n \\
\beta(i) & \dots & \beta(1) & \alpha(1) & \dots & \alpha(n-i)
\end{smallmatrix}\right)^{-1}\right)=
\sum_{2\, \nmid \, i}\sum_{(\alpha,\beta)} \alpha_T\left( \tau\cdot \left(
\begin{smallmatrix}
1 & \dots & i & i+1 & \dots & n \\
\beta(i) & \dots & \beta(1) & \alpha(1) & \dots & \alpha(n-i)
\end{smallmatrix}\right)^{-1}\right)$$
Notice that coefficient $\alpha_T$ shows that some permutation belongs to the subset $T$ or not. Hence sum of these coefficients is equal to cardinality of the intersection of $T$ and the set of considered permutations. Then the following is satisfied: 
$$\forall \tau \in S_n \quad \sum_{2\, | \,i} \sum_{(\alpha,\beta)} \alpha_T\left( \tau\cdot \left(
\begin{smallmatrix}
1 & \dots & i & i+1 & \dots & n \\
\beta(i) & \dots & \beta(1) & \alpha(1) & \dots & \alpha(n-i)
\end{smallmatrix}\right)^{-1}\right)=\mid T \cap \tau \mathrm{I}_n^+ \mid,$$
$$\forall \tau \in S_n \quad \sum_{2\, \nmid \, i}\sum_{(\alpha,\beta)} \alpha_T\left( \tau\cdot \left(
\begin{smallmatrix}
1 & \dots & i & i+1 & \dots & n \\
\beta(i) & \dots & \beta(1) & \alpha(1) & \dots & \alpha(n-i)
\end{smallmatrix}\right)^{-1}\right) = \mid T \cap \tau \mathrm{I}_n^-\mid.$$
So, the previous equality can be represented as 
$$\mid T \cap \tau \mathrm{I}_n^+ \mid = \mid T \cap \tau \mathrm{I}_n^- \mid,\quad  \forall \tau \in S_n.$$ 
As a consequence $T$ is Jacobi if and only if $$\quad \mid T \cap \tau \mathrm{I}_n^+ \mid = 
\mid T \cap \tau \mathrm{I}_n^- \mid, \quad \forall \tau \in S_n.$$
\end{proof}

\end{document}